\documentclass[12pt]{amsart}
\usepackage{amssymb,amsthm,amsmath,epsfig,latexsym}
\usepackage{calc}
\usepackage{times}
\begin{document}

\newcommand{\mmbox}[1]{\mbox{${#1}$}}
\newcommand{\proj}[1]{\mmbox{{\mathbb P}^{#1}}}
\newcommand{\Cr}{C^r(\Delta)}
\newcommand{\CR}{C^r(\hat\Delta)}
\newcommand{\affine}[1]{\mmbox{{\mathbb A}^{#1}}}
\newcommand{\Ann}[1]{\mmbox{{\rm Ann}({#1})}}
\newcommand{\caps}[3]{\mmbox{{#1}_{#2} \cap \ldots \cap {#1}_{#3}}}
\newcommand{\N}{{\mathbb N}}
\newcommand{\Z}{{\mathbb Z}}
\newcommand{\R}{{\mathbb R}}
\newcommand{\Tor}{\mathop{\rm Tor}\nolimits}
\newcommand{\Ext}{\mathop{\rm Ext}\nolimits}
\newcommand{\Hom}{\mathop{\rm Hom}\nolimits}
\newcommand{\im}{\mathop{\rm Im}\nolimits}
\newcommand{\rank}{\mathop{\rm rank}\nolimits}
\newcommand{\supp}{\mathop{\rm supp}\nolimits}
\newcommand{\arrow}[1]{\stackrel{#1}{\longrightarrow}}
\newcommand{\CB}{Cayley-Bacharach}
\newcommand{\coker}{\mathop{\rm coker}\nolimits}
\sloppy
\newtheorem{defn0}{Definition}[section]
\newtheorem{prop0}[defn0]{Proposition}
\newtheorem{conj0}[defn0]{Conjecture}
\newtheorem{thm0}[defn0]{Theorem}
\newtheorem{lem0}[defn0]{Lemma}
\newtheorem{corollary0}[defn0]{Corollary}
\newtheorem{example0}[defn0]{Example}

\newenvironment{defn}{\begin{defn0}}{\end{defn0}}
\newenvironment{prop}{\begin{prop0}}{\end{prop0}}
\newenvironment{conj}{\begin{conj0}}{\end{conj0}}
\newenvironment{thm}{\begin{thm0}}{\end{thm0}}
\newenvironment{lem}{\begin{lem0}}{\end{lem0}}
\newenvironment{cor}{\begin{corollary0}}{\end{corollary0}}
\newenvironment{exm}{\begin{example0}\rm}{\end{example0}}

\newcommand{\defref}[1]{Definition~\ref{#1}}
\newcommand{\propref}[1]{Proposition~\ref{#1}}
\newcommand{\thmref}[1]{Theorem~\ref{#1}}
\newcommand{\lemref}[1]{Lemma~\ref{#1}}
\newcommand{\corref}[1]{Corollary~\ref{#1}}
\newcommand{\exref}[1]{Example~\ref{#1}}
\newcommand{\secref}[1]{Section~\ref{#1}}

\newcommand{\std}{Gr\"{o}bner}
\newcommand{\jq}{J_{Q}}



\title {The minimum distance of sets of points and the minimum socle degree}

\author{\c Stefan O. Toh\v aneanu}
\address{Department of Mathematics\\ The University of Western Ontario\\ London, Ontario N6A 5B7\\}
\email{stohanea@uwo.ca}

\subjclass[2000]{Primary 13D02; Secondary 13D40, 94B27} \keywords{free resolution, socle degrees, minimum distance}

\begin{abstract}
\noindent Let $\mathbb K$ be a field of characteristic 0. Let $\Gamma\subset\mathbb P^n_{\mathbb K}$ be a reduced finite set of points, not all contained in a hyperplane. Let $hyp(\Gamma)$ be the maximum number of points of $\Gamma$ contained in any hyperplane, and let $d(\Gamma)=|\Gamma|-hyp(\Gamma)$. If $I\subset R=\mathbb K[x_0,\ldots,x_n]$ is the ideal of $\Gamma$, then in \cite{t1} it is shown that for $n=2,3$, $d(\Gamma)$ has a lower bound expressed in terms of some shift in the graded minimal free resolution of $R/I$. In these notes we show that this behavior is true in general, for any $n\geq 2$: $d(\Gamma)\geq A_n$, where $A_n=\min\{a_i-n\}$ and $\oplus_i R(-a_i)$ is the last module in the graded minimal free resolution of $R/I$. In the end we also prove that this bound is sharp for a whole class of examples due to Juan Migliore (\cite{m}).
\end{abstract}
\maketitle

\section{Introduction}

Let $\mathbb K$ be a field of characteristic zero and let $\Gamma=\{P_1,\ldots,P_m\}\subset \mathbb P^n_{\mathbb K}$ be a reduced finite set of points, not all in a hyperplane (i.e., non-degenerate). Let $hyp(\Gamma)$ be the maximum number of points of $\Gamma$ lying in any hyperplane. Define \textit{the minimum distance of the set $\Gamma$} to be the number $$d(\Gamma)=m-hyp(\Gamma).$$ The reason we borrowed this terminology from coding theory is that $d(\Gamma)$ is exactly the minimum distance of the (equivalence class of) linear codes with generating matrix having as columns the coordinates of the points of $\Gamma$ (see \cite{tvn} for more details).

Denote with $R=\mathbb K[x_0,\ldots,x_n]$ the (homogeneous) ring of polynomials with coefficients in $\mathbb K$. Let $I\subset R$ be the ideal of $\Gamma$. The goal of these notes is to study $d(\Gamma)$ using the graded minimal free resolution of $R/I$.

Some preliminary results were obtained in \cite{gls} when $\Gamma$ is a complete intersection, and generalized in \cite{t1} when $\Gamma$ is (arithmetically) Gorenstein. In both situations $$d(\Gamma)\geq reg(R/I),$$ the Castelnuovo-Mumford regularity. The question became if this lower bound is true for any reduced non-degenerate finite set of points (\cite{t2}). As we will see below (Example \ref{exm:exampleone}), the answer is negative, yet we will still be able to give a lower bound for $d(\Gamma)$ in this general setup, in terms of the shifts in the graded minimal free resolution of $R/I$.

If $A=\displaystyle\oplus_{i=0}A_i$ is a graded Artinian $\mathbb K-$algebra with maximal ideal $\underline{m}=\displaystyle\oplus_{i>0}A_i$, then $soc(A)=0:\underline{m}$ is a finite dimensional graded $\mathbb K-$vector space, called \textit{the socle of $A$}. So $$soc(A)=\oplus\mathbb K(-b_i),$$ and the positive integers $b_i$ are called \textit{the socle degrees of $A$}.

In our case, if $\bar{I}$ is the Artinian reduction of $I$, the ideal of $\Gamma$, and if $$0\rightarrow F_n=\oplus R(-a_i)\rightarrow\cdots\rightarrow F_1\rightarrow R\rightarrow R/I\rightarrow 0$$ is the graded minimal free resolution of $R/I$, then the last module in the free resolution of $A=R/\bar{I}$ is $F_n(-1)=\oplus R(-(a_i+1))$ and sits in position $n+1$. So, by \cite{ku}, Lemma 1.3, the socle degrees of $A$ are exactly $$b_i=(a_i+1)-(n+1)=a_i-n.$$ We'll abuse the terminology by saying that the socle degrees of $A=R/\bar{I}$ are the socle degrees of $R/I$.

Denote $$A_n=\min\{a_i-n\}$$ to be the minimum value of the socle degrees.

In \cite{t1}, Theorem 4.1, we showed that if $\Gamma$ is any reduced non-degenerate finite set of points in $\mathbb P^k, k=2,3,$ then $d(\Gamma)\geq A_k$. In the first part of these notes we generalize this result (Theorem \ref{thm:main}) showing that if $\Gamma$ is any reduced non-degenerate finite set of points in $\mathbb P^n, n\geq 2,$ then $$d(\Gamma)\geq A_n,$$ and in the second part we investigate if this bound is sharp.

\section{A lower bound on the minimum distance of sets of points}

Let $\Gamma=\{P_1,\ldots,P_m\}\subset \mathbb P^n$ be a reduced non-degenerate finite set of points. We denoted with $hyp(\Gamma)$ the maximum number of points of $\Gamma$ contained in any hyperplane. To obtain the maximum number of points of $\Gamma$ contained in any hypersurface of degree $a$, by \cite{mp}, one should compute $hyp(v_a(\Gamma))$, where $v_a$ is the Veronese embedding of degree $a$ of $\mathbb P^n$ into $\mathbb P^{N_a}$, where $N_a= {{n+a}\choose{a}}-1$. Let us denote $$d(\Gamma)_a=|\Gamma|-hyp(v_a(\Gamma)).$$ Observe that $d(\Gamma)_1=d(\Gamma)$.

From \cite{t2} (using \cite{h}), $d(\Gamma)_a$ is the minimum distance of the evaluation code of order $a$ associated to $\Gamma$. With this fact in mind, \cite{t1}, Proposition 2.1, will constitute the key tool to prove our main result:

\begin{lem}\label{lem:recursion}(\cite{t1}) If $d(\Gamma)_b\geq 2$ for some $b\geq 2$, then for all $1\leq a\leq b-1$, we have $d(\Gamma)_a\geq d(\Gamma)_{a+1}+1$. Therefore, if $d(\Gamma)_b\geq 2$ for some $b\geq 1$, we have $d(\Gamma)_a\geq b-a+2$ for all $1\leq a\leq b$.
\end{lem}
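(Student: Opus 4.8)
The plan is to prove the single-step inequality $d(\Gamma)_a\geq d(\Gamma)_{a+1}+1$ by a ``multiply by a hyperplane'' argument, and then obtain the closed form by iterating it. Since $d(\Gamma)_a=|\Gamma|-hyp(v_a(\Gamma))$, the step inequality is equivalent to
$$hyp(v_{a+1}(\Gamma))\geq hyp(v_a(\Gamma))+1,$$
so it suffices to show that a degree-$a$ hypersurface passing through the maximal number of points of $\Gamma$ can be enlarged, in degree $a+1$, to pass through at least one more point.

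First I would record the monotonicity $hyp(v_a(\Gamma))\leq hyp(v_{a+1}(\Gamma))$, equivalently $d(\Gamma)_{a+1}\leq d(\Gamma)_a$: if a degree-$a$ form $F$ vanishes on a subset $S\subseteq\Gamma$, then for any linear form $\ell$ the product $F\cdot\ell$ is a degree-$(a+1)$ form still vanishing on $S$. Combined with the hypothesis $d(\Gamma)_b\geq 2$, this gives $d(\Gamma)_a\geq d(\Gamma)_b\geq 2$ for every $1\leq a\leq b$; in particular $d(\Gamma)_a\geq 1$ throughout the relevant range, which is exactly the input the next step requires.

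For the step inequality, fix $a$ with $1\leq a\leq b-1$ and choose a degree-$a$ form $F$ vanishing on a set $S\subseteq\Gamma$ with $|S|=hyp(v_a(\Gamma))$. Because $d(\Gamma)_a\geq 1$, there is a point $P\in\Gamma$ not lying on the hypersurface cut out by $F$, so $P\notin S$. Picking any linear form $\ell$ with $\ell(P)=0$, the product $F\cdot\ell$ has degree $a+1$ and vanishes on $S\cup\{P\}$, a set of size $|S|+1$; since $F\cdot\ell$ is a hypersurface of degree $a+1$, by definition $hyp(v_{a+1}(\Gamma))\geq|S|+1=hyp(v_a(\Gamma))+1$, which is the desired $d(\Gamma)_a\geq d(\Gamma)_{a+1}+1$. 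The one point that must be checked carefully here---and essentially the only obstacle---is that $P$ genuinely lies off the hypersurface $F=0$, so that the count strictly increases; this is guaranteed precisely by $d(\Gamma)_a\geq 1$, hence by the hypothesis through monotonicity.

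Finally, for the closed form, I would iterate. When $b\geq 2$, applying the step inequality successively for $a,a+1,\ldots,b-1$ yields
$$d(\Gamma)_a\geq d(\Gamma)_{a+1}+1\geq\cdots\geq d(\Gamma)_b+(b-a)\geq(b-a)+2,$$
using $d(\Gamma)_b\geq 2$ at the last step; the boundary case $a=b$ is the hypothesis itself, and the case $b=1$ is vacuous beyond $a=b=1$. This establishes $d(\Gamma)_a\geq b-a+2$ for all $1\leq a\leq b$.
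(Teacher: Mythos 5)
Your overall strategy --- multiplying a maximal degree-$a$ hypersurface by a linear form through one extra point of $\Gamma$ --- is exactly the idea behind the cited proof of this lemma in \cite{t1} (the paper itself gives no proof, only the citation). But your execution has a genuine gap, and it sits precisely at the point you declare to be ``essentially the only obstacle.'' In this paper $d(\Gamma)_a$ is the minimum distance of the order-$a$ evaluation code; equivalently, $hyp(v_a(\Gamma))$ counts only degree-$a$ hypersurfaces that do \emph{not} contain all of $\Gamma$. This is visible in the formula from \cite{h} invoked in the proof of Theorem~\ref{thm:main}, where the competing subsets $\Gamma'$ must satisfy $\dim(I(\Gamma')_a)>\dim(I(\Gamma)_a)$: a form vanishing at every point of $\Gamma$ gives the zero codeword and witnesses nothing. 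Your product $F\cdot\ell$, for an \emph{arbitrary} $\ell$ through $P$, can be exactly such a form. Moreover, your claim that $d(\Gamma)_a\geq 1$ is the needed input proves too much: a minimum distance is always at least $1$, so if your step argument were valid it would give $d(\Gamma)_a\geq d(\Gamma)_{a+1}+1$ unconditionally for every $a$, hence $d(\Gamma)_1\geq a+1$ for all $a$, which is absurd. The paper's own Example~\ref{exm:exampleone} is a concrete counterexample: there $d(\Gamma)_1=1$, the optimal $F$ is the line $x$ through the four collinear points, $P=[1,0,0]$, and \emph{every} conic $x\ell$ with $\ell(P)=0$ vanishes at all five points; in fact $d(\Gamma)_2=1$ (the evaluation of $x^2$ has weight one), so $d(\Gamma)_1\geq d(\Gamma)_2+1$ is simply false. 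This is exactly why the hypothesis is $d(\Gamma)_b\geq 2$ and not $d(\Gamma)_b\geq 1$.

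The repair is short, and you already derived the needed fact without using it: monotonicity plus the hypothesis give $d(\Gamma)_a\geq d(\Gamma)_b\geq 2$ for $a\leq b$, so $\Gamma\setminus S$ contains a second point $Q\neq P$. Choose $\ell$ with $\ell(P)=0$ and $\ell(Q)\neq 0$, rather than an arbitrary $\ell$ through $P$. Then $F\cdot\ell$ vanishes on $S\cup\{P\}$ but not at $Q$, so it does not vanish on all of $\Gamma$ and legitimately yields $hyp(v_{a+1}(\Gamma))\geq |S|+1$, i.e.\ $d(\Gamma)_a\geq d(\Gamma)_{a+1}+1$. The same care is needed in your monotonicity paragraph: there $\ell$ must be chosen to miss at least one point of the (nonempty) set $\Gamma\setminus S$, which is possible since $\mathbb K$ is infinite. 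With these two corrections your proof is complete and coincides with the argument of \cite{t1}; the final iteration is fine as written.
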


In general, if $a\leq b$ then $d(\Gamma)_a\geq d(\Gamma)_b$.

\vskip .2in

Let $\Gamma'=\Gamma\setminus\{P_m\}$. Let $I=I(\Gamma)$ and $I'=I(\Gamma')$ be the homogeneous ideals in $R=\mathbb K[x_0,x_1,\ldots,x_n]$ of the sets $\Gamma$ and $\Gamma'$.

Since $\Gamma'\varsubsetneq\Gamma$, then $I\varsubsetneq I'$, and consider $$\delta(P_m)=\min\{d|\dim(I'_d)> \dim(I_d)\}\geq 1.$$ An element in $I'\setminus I$ is called a \textit{separator} of $P_m$, and $\delta(P_m)$ is called the \textit{degree of the point $P_m$ in $\Gamma$}. By \cite{gmr}, the Hilbert function of the $R/I$ and the degree of a point in $\Gamma$ are related by the following formula:

\begin{lem} \label{lem:HF}(\cite{gmr})
$$HF(R/I,i)=\left\{
                                        \begin{array}{ll}
                                          HF(R/I',i), & \hbox{if $0\leq i\leq \delta(P_m)-1$;} \\
                                          HF(R/I',i)+1, & \hbox{if $i\geq\delta(P_m)$.}
                                        \end{array}
                                      \right.$$
\end{lem}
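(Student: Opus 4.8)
The plan is to realize the difference of Hilbert functions as the Hilbert function of the \emph{separator module} $I'/I$ and then compute that module explicitly. First I would record the short exact sequence of graded $R$-modules
$$0 \longrightarrow I'/I \longrightarrow R/I \longrightarrow R/I' \longrightarrow 0,$$
which yields $HF(R/I,i) - HF(R/I',i) = \dim_{\mathbb K}(I'/I)_i$ in every degree $i$. Thus the lemma is equivalent to the statement that $(I'/I)_i$ is zero for $i<\delta(P_m)$ and one-dimensional for $i\geq\delta(P_m)$.

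Next I would identify $I'/I$ geometrically. Since $\Gamma$ and $\Gamma'$ are reduced sets of distinct points, we have $I = I(\Gamma) = I(\Gamma') \cap I(P_m) = I' \cap \wp$, where $\wp = I(P_m)$ is the graded prime ideal of the single point $P_m$. The second isomorphism theorem then provides a degree-preserving isomorphism
$$I'/I = I'/(I'\cap\wp) \;\cong\; (I'+\wp)/\wp,$$
which is a nonzero homogeneous ideal of the coordinate ring $R/\wp$ of the point $P_m$; it is nonzero because $\Gamma'\varsubsetneq\Gamma$ forces a separator to exist.

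The key structural fact is that $R/\wp$ is, as a graded ring, a polynomial ring $\mathbb K[s]$ in a single variable of degree $1$, so that $HF(R/\wp,i)=1$ for all $i\geq 0$. Every nonzero homogeneous ideal of $\mathbb K[s]$ is principal, of the form $(s^\delta)$, whose Hilbert function is $0$ for $i<\delta$ and $1$ for $i\geq\delta$. It then remains to match the exponent $\delta$ with $\delta(P_m)$: under $I = I'\cap\wp$, a form in $I'_i$ fails to lie in $I_i$ precisely when its image in $(R/\wp)_i$ is nonzero, so the least degree in which $(I'+\wp)/\wp$ is nonzero equals $\min\{d \mid \dim I'_d > \dim I_d\} = \delta(P_m)$. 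Combining the displayed isomorphism with this step function gives the two cases of the lemma.

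I expect the only delicate point to be the graded identification $I = I'\cap\wp$ together with the verification that the second-isomorphism map preserves degrees; once $I'/I$ is recognized as a homogeneous ideal of the one-dimensional ring $R/\wp$, the step-function shape of its Hilbert function—and hence the jump of exactly one at $\delta(P_m)$—is automatic.
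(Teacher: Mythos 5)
Your proposal is correct, but there is nothing in the paper to compare it against: the paper does not prove this lemma at all, it simply imports it from \cite{gmr} (Geramita--Maroscia--Roberts). Your argument is a complete, self-contained and elementary proof of the quoted statement. The short exact sequence $0\rightarrow I'/I\rightarrow R/I\rightarrow R/I'\rightarrow 0$ correctly reduces the lemma to computing $\dim_{\mathbb K}(I'/I)_i$; the identification $I=I'\cap\wp$ holds because both $\Gamma$ and $\Gamma'$ are reduced, and the second-isomorphism map $I'/(I'\cap\wp)\cong(I'+\wp)/\wp$ is degree-preserving since all ideals involved are homogeneous. Since $\wp$ is generated by $n$ independent linear forms, $R/\wp\cong\mathbb K[s]$ as a graded ring, and its nonzero homogeneous ideals are exactly the $(s^{\delta})$, which forces the step-function shape with jump exactly one. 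Your matching of the exponent $\delta$ with $\delta(P_m)$ is also right: for homogeneous $f\in I'_i$ one has $f\in I_i$ if and only if $f\in\wp$, so the least degree where $(I'+\wp)/\wp$ is nonzero is $\min\{d\mid\dim I'_d>\dim I_d\}=\delta(P_m)$. The only point you wave at rather than prove is that $I'/I\neq 0$; this is easily supplied (for each $Q\in\Gamma'$ pick a linear form vanishing at $Q$ but not at $P_m$ and take the product), and the paper itself asserts $I\varsubsetneq I'$ before defining $\delta(P_m)$. This separator-module viewpoint is essentially the one underlying \cite{abm} and \cite{b}, which the author invokes immediately afterwards in Lemma \ref{lem:separator}, so your route is consistent with the machinery the paper relies on and has the advantage of making the ``jump by exactly one'' transparent rather than taking it on faith from the literature.
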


Suppose the graded minimal free resolution of the $R-$ module $R/I$ is $$0\rightarrow F_n=\oplus R(-a_i)\rightarrow\cdots\rightarrow F_1\rightarrow R\rightarrow R/I\rightarrow 0,$$ and let $A_n=\min\{a_i-n\}$ be the minimum socle degree of $R/I$.

It was shown in \cite{abm}, for the case of points in $\mathbb P^2$, and, in general, in \cite{b} (using \cite{k}), for the case of points in $\mathbb P^n, n\geq 2$, that the degree of a point in $\Gamma$ is among the socle degrees of $R/I$.

\begin{lem} (\cite{b})\label{lem:separator} If $P$ is any point in $\Gamma$ and $\delta(P)$ is as above, then $$\delta(P)\geq A_n.$$
\end{lem}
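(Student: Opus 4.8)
The plan is to prove that $\delta(P)$ is one of the socle degrees of the Artinian reduction $A=R/\bar I$; since $A_n$ is by construction the \emph{minimum} of the socle degrees $\{a_i-n\}$, the inequality $\delta(P)\geq A_n$ then follows at once. After relabeling I may assume $P=P_m$, so that $\Gamma'=\Gamma\setminus\{P\}$ in the notation already set up, and I write $\delta=\delta(P)$.

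First I would record the short exact sequence of graded $R$-modules
$$0\rightarrow I'/I\rightarrow R/I\rightarrow R/I'\rightarrow 0,$$
which exists because $I\subseteq I'$. Next, since $\mathbb{K}$ has characteristic $0$ (hence is infinite) and the associated primes of both $R/I$ and $R/I'$ are the ideals of the (finitely many) points of $\Gamma$ and $\Gamma'$, I can choose a single general linear form $\ell$ avoiding all of these primes, so that $\ell$ is a nonzerodivisor on both $R/I$ and $R/I'$. Writing $A=R/(I+(\ell))$ and $A'=R/(I'+(\ell))$ for the two Artinian reductions, I would tensor the sequence above with $R/(\ell)$. Because $R/(\ell)$ has the length-one free resolution $0\to R(-1)\xrightarrow{\ell}R\to R/(\ell)\to 0$ and $\ell$ is a nonzerodivisor, the terms $\Tor_1(R/I,R/(\ell))$ and $\Tor_1(R/I',R/(\ell))$ vanish, and the long exact $\Tor$ sequence collapses to the short exact sequence
$$0\rightarrow (I'/I)\otimes_R R/(\ell)\rightarrow A\rightarrow A'\rightarrow 0.$$

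The heart of the argument is then a Hilbert-function computation. Since $\ell$ is a nonzerodivisor, $HF(A,i)=HF(R/I,i)-HF(R/I,i-1)$ and likewise for $A'$; combining this with \lemref{lem:HF}, which says that $HF(R/I,i)-HF(R/I',i)$ jumps from $0$ to $1$ exactly at $i=\delta$, I obtain
$$HF\big((I'/I)\otimes_R R/(\ell),\,i\big)=HF(A,i)-HF(A',i)=\begin{cases}1,& i=\delta,\\ 0,& i\neq\delta.\end{cases}$$
Hence $(I'/I)\otimes_R R/(\ell)\cong\mathbb{K}(-\delta)$ is one-dimensional, concentrated in degree $\delta$, and by the displayed sequence it embeds into $A$ as a graded submodule. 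A nonzero submodule concentrated in a single degree is annihilated by $\underline{m}$ (multiplication by any positive-degree element lands in a degree where the submodule is zero), so it lies in $soc(A)$. Therefore $\delta=\delta(P)$ is a socle degree of $A$, which gives $\delta(P)\geq A_n$.

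I expect the only delicate point to be the vanishing of the $\Tor$ terms, i.e. the existence of a linear form that is simultaneously a nonzerodivisor on $R/I$ and on $R/I'$; the remainder is bookkeeping with Hilbert functions. An alternative route avoids the generic linear form by instead identifying the separator module directly as $I'/I\cong (R/I(P))(-\delta)$, the shifted coordinate ring of the removed point, and reading off its socle contribution, but the Hilbert-function argument above seems cleaner and more self-contained.
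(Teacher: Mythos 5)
Your proof is correct, but note that the paper itself offers no proof of this lemma: it is imported wholesale from Bazzotti \cite{b}, whose argument (following Kreuzer \cite{k}) runs through the canonical module of the zero-dimensional scheme. Your route is genuinely different and self-contained: you tensor $0\to I'/I\to R/I\to R/I'\to 0$ with $R/(\ell)$ for a general linear form $\ell$, use \lemref{lem:HF} to see that $(I'/I)\otimes_R R/(\ell)$ is one-dimensional and concentrated in degree $\delta(P)$, and observe that its image in the Artinian reduction $A$ is a graded submodule concentrated in a single degree, hence lies in $soc(A)$. The steps check out: since $\mathbb K$ is infinite, prime avoidance yields a linear form that is simultaneously a nonzerodivisor on $R/I$ and $R/I'$ (their associated primes are the finitely many ideals of the points); for the injectivity of $(I'/I)\otimes_R R/(\ell)\to A$ only $\Tor_1(R/I',R/(\ell))=0$ is actually needed, which your hypothesis supplies; and the Hilbert-function bookkeeping via \lemref{lem:HF} is right. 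The one point you should make explicit is that the socle degrees of your $A=R/(I+(\ell))$ are the numbers $a_i-n$ \emph{regardless} of which linear nonzerodivisor $\ell$ was chosen: this is exactly the paper's mapping-cone and Kustin--Ulrich computation from the introduction, which applies verbatim to any linear nonzerodivisor, so the loop closes. With that remark in place, your argument proves the stronger statement that $\delta(P)$ is a socle degree of $R/I$ --- precisely what the paper quotes from \cite{b} --- and hence $\delta(P)\geq A_n$. What your approach buys is independence from the literature on conductor degrees: the lemma becomes a consequence of \lemref{lem:HF} and the socle-degree identification already set up in the paper, at the modest cost of a general-position choice of $\ell$ and a short $\Tor$ computation.
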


Once we have this, we can prove the main result.

\begin{thm} \label{thm:main} In the above notations, $$d(\Gamma)\geq A_n.$$
\end{thm}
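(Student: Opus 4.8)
The plan is to reduce the whole statement to the single inequality $d(\Gamma)_{A_n-1}\geq 2$ and then let the recursion in Lemma~\ref{lem:recursion} carry this bound down to the case $a=1$. Before doing so I would dispose of the degenerate range: since the socle degrees are positive, $A_n\geq 1$, and if $A_n=1$ the conclusion $d(\Gamma)\geq 1$ is immediate from non-degeneracy of $\Gamma$ (no hyperplane contains all $m$ points, so $hyp(\Gamma)\leq m-1$). Hence I may assume $A_n\geq 2$, so that $b:=A_n-1\geq 1$ and the recursion lemma is applicable.

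The heart of the argument is the claim that $d(\Gamma)_{A_n-1}\geq 2$. Here I would use that $d(\Gamma)_a$ is the minimum distance of the order-$a$ evaluation code, so that $hyp(v_a(\Gamma))=m-d(\Gamma)_a$ is the largest number of points of $\Gamma$ lying on a degree-$a$ hypersurface $V(F)$ coming from a \emph{nonzero} codeword, i.e. with $F\notin I$. Suppose toward a contradiction that $hyp(v_{A_n-1}(\Gamma))\geq m-1$, so that some form $F$ of degree $A_n-1$ with $F\notin I$ vanishes at least at $m-1$ of the points. Since $F\notin I$ it cannot vanish on all of $\Gamma$, so it vanishes at exactly the $m-1$ points of $\Gamma\setminus\{P_j\}$ for a unique $P_j$. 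Then $F\in I(\Gamma\setminus\{P_j\})$ while $F\notin I$, which by the Hilbert-function jump recorded in Lemma~\ref{lem:HF} exhibits $F$ as a separator of $P_j$ of degree $A_n-1$. Consequently $\delta(P_j)\leq A_n-1<A_n$, contradicting Lemma~\ref{lem:separator}. Therefore $hyp(v_{A_n-1}(\Gamma))\leq m-2$, i.e. $d(\Gamma)_{A_n-1}\geq 2$.

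To finish I would invoke Lemma~\ref{lem:recursion} with $b=A_n-1$. Since $d(\Gamma)_{A_n-1}\geq 2$ and $b\geq 1$, the iterated inequality $d(\Gamma)_a\geq b-a+2$ holds for all $1\leq a\leq A_n-1$; specializing to $a=1$ gives $d(\Gamma)=d(\Gamma)_1\geq (A_n-1)-1+2=A_n$, which is the asserted bound.

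I expect the main obstacle to be the middle step, namely correctly translating the coding-theoretic quantity $d(\Gamma)_{A_n-1}$ into the algebraic language of separators. The crucial subtlety is that a codeword must be nonzero, so only hypersurfaces $V(F)$ with $F\notin I$ are counted when computing $hyp(v_{A_n-1}(\Gamma))$; this is precisely what forces a degree-$(A_n-1)$ hypersurface meeting $\Gamma$ in at least $m-1$ points to miss exactly one point and thus be a separator of impossibly small degree. Once this bookkeeping is done cleanly, Lemma~\ref{lem:separator} supplies the needed lower bound $\delta(P_j)\geq A_n$ and the rest is a direct application of the recursion.
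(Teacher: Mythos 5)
Your proof is correct and follows essentially the same route as the paper: both establish $d(\Gamma)_{A_n-1}\geq 2$ by showing that a weight-one codeword of degree $A_n-1$ would be a separator of some point of degree at most $A_n-1$, contradicting Lemma~\ref{lem:separator}, and then both finish with Lemma~\ref{lem:recursion} applied with $b=A_n-1$. The only cosmetic difference is that the paper first proves $d(\Gamma)_{\delta-1}\geq 2$ for $\delta=\min_i\delta(P_i)$ (the contradiction there being with minimality of $\delta$) and then passes to degree $A_n-1$ by monotonicity, whereas you argue directly at degree $A_n-1$.
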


\begin{proof} The set $\Gamma$ is non-degenerate, so $A_n\geq 1$. If $A_n=1$, then the result is immediate since $d(\Gamma)\geq 1$ all the time. Assume that $A_n\geq 2$.

Let $$\delta=\delta(\Gamma)=\min\{\delta(P_i)|i=1,\ldots,m\}.$$ If $\delta=1$, then from Lemma \ref{lem:separator} $A_n=1$. So let us assume that $\delta\geq 2$ and consider $d(\Gamma)_{\delta-1}$.

By \cite{h}, for any $a\geq 1$, we have that $$d(\Gamma)_a=|\Gamma|-\max_{\Gamma'\subset\Gamma}\{|\Gamma'|: \dim(I(\Gamma')_a)>\dim(I(\Gamma)_a)\}.$$ So, if $d(\Gamma)_{\delta-1}=1$, then there exists $Q\in\Gamma$ such that $\dim(J_{\delta-1})>\dim(I_{\delta-1})$, where $J$ is the ideal of $\Gamma\setminus\{Q\}$. From Lemma \ref{lem:HF}, $$\delta-1\geq\delta(Q).$$ But this contradicts the minimality of $\delta$. Therefore, $$d(\Gamma)_{\delta-1}\geq 2.$$

From Lemma \ref{lem:separator} we have that $\delta-1\geq A_n-1$ and therefore, $$d(\Gamma)_{A_n-1}\geq d(\Gamma)_{\delta-1}\geq 2.$$ By using Lemma \ref{lem:recursion} with $b=A_n-1$ and $a=1$, we obtain $$d(\Gamma)=d(\Gamma)_1\geq (A_n-1)-1+2=A_n.$$ \end{proof}

\vskip .2in

\begin{exm} \label{exm:exampleone} Consider $\Gamma = \{[0,0,1],[0,1,0],[0,2,1],[0,3,1],[1,0,0]\}\subset \mathbb P^2$. The first four points lie on the line of equation $x=0$, and the fifth does not. Therefore $hyp(\Gamma)=4$ and $d(\Gamma)=5-4=1$. The ideal of $\Gamma$ in $R=\mathbb K[x,y,z]$ is $$I=\langle x,y\rangle\cap\langle x,z\rangle\cap \langle x,2z-y\rangle\cap\langle x,3z-y\rangle\cap\langle y,z\rangle.$$

With the help of Macaulay 2 by Grayson and Stillman, the minimal graded free resolution of $R/I$ is: $$0\rightarrow R(-5)\oplus R(-3)\rightarrow R(-4)\oplus R^2(-2)\rightarrow R\rightarrow R/I\rightarrow 0.$$ So $reg(R/I)=5-2=3$ and $A_2=3-2=1$.
\end{exm}

\section{Sets of points with minimum distance equal to $A_n$}

Example \ref{exm:exampleone} belongs to the class of examples for which $d(\Gamma)=A_n$. In this section we are going to investigate the following question: for given $n$ and $m$, under what conditions we can find, if it exists, a non-degenerate reduced finite set of $m$ points $\Gamma\subset\mathbb P^n$ with $d(\Gamma)=A_n$? Also we can ask a bit more: for given $n$, $m$ and $d(\Gamma)$, can we construct a non-degenerate reduced finite set of $m$ points $\Gamma\subset\mathbb P^n$ with $d(\Gamma)=A_n$?

Denote with $a(\Gamma)=\min\{a_i\}$ (we keep the same notations as before: $F_n=\oplus R(-a_i)$ is the last module in the graded minimal free resolution of $R/I$). Therefore, $A_n=a(\Gamma)-n$.

First of all, since $R(-a(\Gamma))$ is a direct summand in $F_n$, then $a(\Gamma)\geq n$. If $a(\Gamma)=n$, then one will have $R(-1)$ as a direct summand in $F_1$, which means that $I$ has a minimal generator of degree 1. This means that $\Gamma$ lies in a hyperplane and, therefore, $\Gamma$ is degenerate. So we must have that $$a(\Gamma)\geq n+1.$$

Let's see some simple cases:

\vskip .1in

\begin{exm}\label{exm:exampletwo} The case: $a(\Gamma)=n+1$. This is the case of Example \ref{exm:exampleone}. Construct $\Gamma$ as $m-1$ points lying in a hyperplane and one point outside this hyperplane. From Theorem \ref{thm:main}, since $d(\Gamma)=m-(m-1)=1$, we have $a(\Gamma)-n\leq 1$ and from the restriction above, we have $a(\Gamma)=n+1$. So this set satisfies the requirement $d(\Gamma)=A_n$.\end{exm}

\vskip .1in

\begin{exm} \label{exm:examplethree} The case $a(\Gamma)=n+2$. Since any $n$ points in $\mathbb P^n$ lie in a hyperplane, then $m\geq n+2$ (if $m=n+1$ we'd be in the case above). If $m=n+2$, let's pick $\Gamma$ to be a generic set of $n+2$ points in $\mathbb P^n$. By \cite{go}, $R/I$ is Gorenstein of regularity $r=2$. So $A_n=r=2$. Since $hyp(\Gamma)=n$, we have that $d(\Gamma)=(n+2)-n=2=A_n$.\end{exm}

\vskip .3in

In general, let us consider the following set of points $\Gamma$ in $\mathbb P^n$, suggested by Juan Migliore (\cite{m}).

Let $\Gamma_1\subset \mathbb P^n$ be a generic set of $\alpha$ points in $\mathbb P^{n-1}$ embedded in $\mathbb P^n$ (assume the hyperplane where they lie has equation $x_0=0$).

Let $\Gamma_2\subset \mathbb P^n$ be a set of $\beta$ distinct points on a line in $\mathbb P^n$ not contained in the above hyperplane. Assume that the coordinates of these points are $[1,u_i,1\ldots,1], 1\leq i\leq\beta, u_i\neq u_j$.

Let $\Gamma=\Gamma_1\cup\Gamma_2$ and we would like to have that $hyp(\Gamma)=\alpha$ (so one immediate restriction is that $\alpha\geq \beta+n-2$).

The goal is to see under what conditions $$d(\Gamma)=(\alpha+\beta)-\alpha=\beta=A_n.$$

\vskip .1in

Let $I,I_1,I_2\subset R=\mathbb K[x_0,\ldots,x_n]$ be the ideals of the sets $\Gamma, \Gamma_1$ and, respectively, $\Gamma_2$.

We have that $$I_2=\langle \prod_{i=1}^{\beta}(u_ix_0-x_1),x_2-x_0,\ldots,x_n-x_0\rangle$$ and $$I_1=\langle x_0,J\rangle,$$ where $J\subset S=\mathbb K[x_1,\ldots,x_n]$ is the ideal of the generic set of $\alpha$ points in $\mathbb P^{n-1}$.

First, let $s$ be the smallest integer such that $\alpha<{{s+n-1}\choose{n-1}}$. Since $J$ is the ideal of a generic set of $\alpha$ points in $\mathbb P^{n-1}$, then the Hilbert function is as nice as possible (in fact this is the definition of a generic set of points): $$HF(S/J,i)=\left\{
                         \begin{array}{ll}
                           {{i+n-1}\choose{n-1}}, & \hbox{if $i\leq s-1$;} \\
                           \alpha, & \hbox{if $i\geq s$.}
                         \end{array}
                       \right.$$

Suppose the minimal free resolution of $S/J$ is $$0\rightarrow C_{n-1}\rightarrow \cdots\rightarrow C_1\rightarrow S\rightarrow S/J\rightarrow 0.$$ Suppose that $u$ is the minimum shift in $C_{n-1}$. Then $u-(n-1)\geq s$; otherwise, moving down on the resolution to $C_1$ we'd have an element of degree $< s$ and this contradicts the Hilbert function. Also the Hilbert function tells us that the regularity of $S/J$ is $s$. So $S/J$ is \textit{level}: $$C_{n-1}=S^k(-(s+n-1)).$$

$J$ is minimally generated in degree $\geq s$ and the regularity of $S/J$ is $s$, therefore $$C_1=S^{p_1}(-s)\oplus S^{p_2}(-(s+1)).$$

Since $I_1=\langle x_0,J\rangle$, then the minimal free resolution of $R/I_1$ is: $$\mathbb G_{*}: 0\rightarrow G_n=C_{n-1}[x_0](-1)\rightarrow G_{n-1}=C_{n-2}[x_0](-1)\oplus C_{n-1}[x_0]\rightarrow\cdots$$ $$\rightarrow G_1=R(-1)\oplus C_1[x_0]\rightarrow R\rightarrow R/I_1\rightarrow 0,$$ where if $C_i=\oplus S(-c_{ij})$, we denoted $C_i[x_0]=\oplus R(-c_{ij})$.

Also, since $J$ is the ideal of points not all lying in a hyperplane, then $J\nsubseteq\langle x_2,\ldots,x_n\rangle$, and therefore one can assume that $$\langle J,x_2,\ldots,x_n\rangle=\langle x_1^v,x_2,\ldots,x_n\rangle,$$ for $v=s$ or $v=s+1$.

\vskip .1in

We have that $I=I_1\cap I_2$ which leads to the following exact sequence of $R-$modules:

$$(*) \mbox{ } 0\rightarrow R/I\rightarrow R/I_1\oplus R/I_2\rightarrow R/(I_1+I_2)\rightarrow 0.$$

We have that $$I_1+I_2=\langle x_0,J, \prod_{i=1}^{\beta}(u_ix_0-x_1),x_2-x_0,\ldots,x_n-x_0\rangle$$ $$=\langle x_0, x_1^t,x_2,\ldots,x_n\rangle,$$ where $t=\min\{v,\beta\}$.

With this, $I_1+I_2$ is a complete intersection of codimension $n+1$ and $R/(I_1+I_2)$ has minimal free resolution $$\mathbb E_{*}: 0\rightarrow E_{n+1}=R(-(t+n))\rightarrow\cdots\rightarrow E_1=R(-t)\oplus R(-1)^n\rightarrow R.$$

Also $I_2$ is a complete intersection of codimension $n$ and $R/I_2$ has minimal free resolution $$\mathbb H_{*}: 0\rightarrow H_n=R(-(\beta+n-1))\rightarrow\cdots\rightarrow H_1=R(-\beta)\oplus R(-1)^{n-1}\rightarrow R.$$

Suppose the minimal free resolution of $R/I$ is $$\mathbb F_{*}: 0\rightarrow F_n\rightarrow\cdots\rightarrow F_1\rightarrow R\rightarrow R/I\rightarrow 0.$$

The mapping cone construction (see \cite{e} for background on resolutions) applied to the exact sequence $(*)$ above gives the following free resolution (not necessarily minimal) for $R/(I_1+I_2)$: $$\mathbb W_{*}: 0\rightarrow W_{n+1}=F_n\rightarrow W_n=F_{n-1}\oplus(G_n\oplus H_n)\rightarrow\cdots$$ $$\rightarrow W_1=R\oplus G_1\oplus H_1\rightarrow R^2\rightarrow R/(I_1+I_2)\rightarrow 0.$$

Comparing this with the minimal free resolution we obtained before we get that $E_{n+1}=R(-(t+n))$ is a direct summand of $W_{n+1}=F_n$. So $t+n\geq a(\Gamma)$ and hence, $$t\geq A_n.$$ This leads to the following restriction:

\begin{lem}\label{lem:lemma1} If $s\leq \beta-2$, then $$A_n<\beta.$$
\end{lem}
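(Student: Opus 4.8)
The plan is to read the conclusion off directly from the inequality $t\geq A_n$ that was already extracted from the mapping cone comparison, using nothing more than the explicit description $t=\min\{v,\beta\}$ with $v\in\{s,s+1\}$.

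First I would isolate the one structural ingredient. The mapping cone resolution $\mathbb{W}_{*}$ of $R/(I_1+I_2)$ has $W_{n+1}=F_n$, and since $E_{n+1}=R(-(t+n))$ is a direct summand of the last module in the \emph{minimal} resolution $\mathbb{E}_{*}$, it cannot be cancelled and must persist as a summand of $F_n$. Hence $t+n\geq a(\Gamma)$, that is, $A_n\leq t$. This is the only nontrivial input, and it is supplied by the preceding discussion.

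Next I would run the short case analysis forced by the hypothesis $s\leq\beta-2$. If $v=s$, then $t=\min\{s,\beta\}=s$ because $s\leq\beta-2<\beta$, so $A_n\leq s\leq\beta-2<\beta$. If $v=s+1$, then $t=\min\{s+1,\beta\}=s+1$ because $s+1\leq\beta-1<\beta$, so $A_n\leq s+1\leq\beta-1<\beta$. In either case $A_n<\beta$, which is the assertion.

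The entire content sits in the value of $t$: the hypothesis $s\leq\beta-2$ is precisely the condition guaranteeing that the complete intersection degree $t$ is governed by $v$ rather than by $\beta$, and that this value remains \emph{strictly} below $\beta$. The only point demanding care is the strictness in the worst case $v=s+1$, where one needs $s+1<\beta$; this is exactly why the hypothesis reads $s\leq\beta-2$ and not $s\leq\beta-1$. Beyond this bookkeeping there is no real obstacle, the substantive work having already been carried out in constructing $\mathbb{E}_{*}$, $\mathbb{H}_{*}$, and the mapping cone $\mathbb{W}_{*}$.
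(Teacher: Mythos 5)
Your proposal is correct and follows essentially the same route as the paper: the paper establishes $A_n\leq t$ from the mapping cone comparison immediately before the lemma, and then its entire proof is the observation that $s\leq\beta-2$ forces $t=\min\{v,\beta\}<\beta$ since $v\in\{s,s+1\}$. Your case split on $v=s$ versus $v=s+1$ just spells out this one-line inequality explicitly.
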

\begin{proof} If $s\leq\beta-2$, then $t=\min\{v,\beta\}<\beta.$ \end{proof}

\vskip .1in

$\mathbb W_{*}$ is a free resolution of $R/(I_1+I_2)$ and $\mathbb E_{*}$ is a minimal free resolution of the same $R-$module $R/(I_1+I_2)$. From the definition of minimality, one can obtain $\mathbb E_{*}$ from $\mathbb W_{*}$ by removing the redundancies in $\mathbb W_{*}$; that is, some differential maps in $\mathbb W_{*}$ have pieces of degree 0 that can be erased. This process of removing the redundancies will be called a \textit{cancellation}. For example, in the differential $$W_1=R\oplus G_1\oplus H_1\rightarrow R^2,$$ we have the redundancy $R\rightarrow R$ that can be removed to obtain $$G_1\oplus H_1\rightarrow R.$$

\vskip .1in

\begin{lem} \label{lem:lemma2} If $s\geq\beta$, then $$A_n=\beta\mbox{ or }A_n=\beta-1.$$
\end{lem}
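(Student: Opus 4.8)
The plan is to compare the mapping-cone resolution $\mathbb W_{*}$ with the minimal free resolution $\mathbb E_{*}$ of $R/(I_1+I_2)$, exactly as in the proof of \lemref{lem:lemma1}, but now squeezing $a(\Gamma)=\min\{a_i\}$ between an upper and a lower bound; the conclusion then follows from $A_n=a(\Gamma)-n$. For the upper bound I would first note that $s\geq\beta$ forces $v\in\{s,s+1\}$ to satisfy $v\geq\beta$, so $t=\min\{v,\beta\}=\beta$ and the top module of the minimal resolution is $E_{n+1}=R(-(\beta+n))$. Since $E_{n+1}$ is a direct summand of $W_{n+1}=F_n$, we get $a(\Gamma)\leq\beta+n$, that is, $A_n\leq\beta$.

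The heart of the argument is the matching lower bound $A_n\geq\beta-1$, i.e.\ that every shift $a_i$ occurring in $F_n$ satisfies $a_i\geq\beta+n-1$. For this I would reduce $\mathbb W_{*}$ to $\mathbb E_{*}$ by a sequence of cancellations and keep track of which summands of the top module $W_{n+1}=F_n$ can disappear. A summand of $F_n$ can only be cancelled against a summand of $W_n=F_{n-1}\oplus G_n\oplus H_n$ of the same degree joined to it by a unit entry of the differential $W_{n+1}\to W_n$, which in the mapping cone has block form $\begin{pmatrix}\pm d_F\\ \psi_n\end{pmatrix}$, with $d_F\colon F_n\to F_{n-1}$ a differential of the \emph{minimal} resolution $\mathbb F_{*}$ (all entries in the maximal ideal) and $\psi_n\colon F_n\to G_n\oplus H_n$ the comparison map (entries of degree $\geq 0$). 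Hence the only unit entries live in the $\psi_n$-block, so every cancellable summand of $F_n$ is matched with a summand of $G_n=C_{n-1}[x_0](-1)=R^k(-(s+n))$ or of $H_n=R(-(\beta+n-1))$; together with the surviving summand $R(-(\beta+n))$ coming from $E_{n+1}$, this forces every $a_i$ into the set $\{\beta+n,\ s+n,\ \beta+n-1\}$. Because $s\geq\beta$ gives $s+n\geq\beta+n>\beta+n-1$, the smallest possible value of $a_i$ is $\beta+n-1$, so $a(\Gamma)\geq\beta+n-1$ and $A_n\geq\beta-1$. Combined with the upper bound this yields $A_n=\beta$ or $A_n=\beta-1$.

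The step I expect to be the main obstacle is making rigorous the assertion that no cancellation ever pairs a summand of $F_n$ with a summand of $F_{n-1}$: a single cancellation is a Gaussian elimination that can modify the surviving differentials, so one must verify that the blocks $F_{\bullet}\to F_{\bullet}$ never acquire a unit during the entire reduction. When a unit of the $\psi_n$-block is cancelled, the only change to the top differential is the Schur complement $\delta-\gamma\alpha^{-1}\beta$, and a degree bookkeeping shows that any correction landing in $F_{n-1}$ factors through an entry of $d_F$ (degree $\geq 1$ by minimality) and an entry of $\psi_n$ (degree $\geq 0$), hence has degree $\geq 1$ and is never a unit; cancellations at lower positions only delete columns of this differential and so cannot create units either. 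I would isolate this persistence-of-minimality statement as the technical lemma behind the degree count, after which $A_n\in\{\beta-1,\beta\}$ follows at once.
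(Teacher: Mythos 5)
Your proof is correct and follows essentially the same route as the paper: compare the mapping cone $\mathbb W_{*}$ with the minimal resolution $\mathbb E_{*}$, use minimality of $\mathbb F_{*}$ to rule out cancellations of summands of $W_{n+1}=F_n$ against $F_{n-1}$, force the excess summands of $F_n$ to cancel against $G_n\oplus H_n=R^k(-(s+n))\oplus R(-(\beta+n-1))$, and finish with the degree count $s+n\geq\beta+n$. The only difference is that you make explicit, via the Schur-complement bookkeeping, the persistence-of-minimality claim that the paper dispatches in one sentence (``$0\rightarrow F_n\rightarrow F_{n-1}$ is part of a minimal free resolution, so there are no cancellations possible here''), which is a welcome sharpening rather than a different argument.
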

\begin{proof} If $s\geq\beta$, then since $v=s$ or $s+1$ we have that $t=\min\{v,\beta\}=\beta$. We saw right before Lemma \ref{lem:lemma1} that $$A_n\leq t=\beta$$ and $$W_{n+1}=F_n=R(-(\beta+n))\oplus K.$$

The only way one has a cancellation in $W_{n+1}$ to obtain $E_{n+1}=R(-(\beta+n))$ is only if $K$ is a direct summand in $$W_n=F_{n-1}\oplus(G_n\oplus H_n).$$ But $K$ is a direct summand in $F_n$ and $0\rightarrow F_n\rightarrow F_{n-1}$ is a part of a minimal free resolution, so there are no cancellations possible here. Therefore, $K$ is a direct summand in $$G_n\oplus H_n=R^k(-(s+n))\oplus R(-(\beta+n-1)).$$

If $A_n\neq\beta$ then $A_n<\beta$ and so $a(\Gamma)=A_n+n<\beta+n$. So $R(-a(\Gamma))$, which is a direct summand in $F_n$, should occur as a direct summand in $K$. So $a(\Gamma)=s+n$ or $a(\Gamma)=n+\beta-1$. Since $s\geq\beta$ we have $a(\Gamma)<\beta+n\leq s+n$ and we are left with $$A_n=\beta-1.$$
\end{proof}

\begin{lem} \label{lem:lemma3} If $s\geq\beta+2$ then $$A_n=\beta.$$
\end{lem}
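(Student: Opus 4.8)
The plan is to exploit the dichotomy already furnished by \lemref{lem:lemma2}: since $s\geq\beta+2\geq\beta$ we know that $A_n\in\{\beta-1,\beta\}$, and the proof of that lemma shows $A_n=\beta-1$ occurs exactly when $R(-(\beta+n-1))$ is a direct summand of $F_n$, i.e. when the graded Betti number $\dim_{\mathbb{K}}\Tor_n^R(R/I,\mathbb{K})_{\beta+n-1}$ is nonzero. So the entire task reduces to proving
$$\Tor_n^R(R/I,\mathbb{K})_{\beta+n-1}=0,$$
after which $A_n=\beta$ is forced.

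To compute this I would apply $\Tor_{\bullet}^R(-,\mathbb{K})$ to the short exact sequence $(*)$ and isolate the strand in homological degree $n$ and internal degree $\beta+n-1$. The two flanking terms vanish in this degree: from $\mathbb{E}_*$ one has $\Tor_{n+1}^R(R/(I_1+I_2),\mathbb{K})_{\beta+n-1}=0$ because $E_{n+1}=R(-(\beta+n))$ lives only in degree $\beta+n$; and from $\mathbb{G}_*$ one has $\Tor_n^R(R/I_1,\mathbb{K})_{\beta+n-1}=0$ because $G_n=R^k(-(s+n))$ and the hypothesis $s\geq\beta+2$ gives $s+n\geq\beta+n+2>\beta+n-1$. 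This is the step that consumes the hypothesis: it guarantees that the only contribution to the middle term in this degree is from $R/I_2$. Using $H_n=R(-(\beta+n-1))$ together with the Koszul form $E_n=R(-n)\oplus R(-(\beta+n-1))^n$, the long exact sequence collapses to
$$0\rightarrow\Tor_n^R(R/I,\mathbb{K})_{\beta+n-1}\rightarrow\mathbb{K}\arrow{\mu}\mathbb{K}^n,$$
where $\mu$ is induced by the surjection $R/I_2\rightarrow R/(I_1+I_2)$. Hence $\Tor_n^R(R/I,\mathbb{K})_{\beta+n-1}\cong\ker\mu$, and it suffices to show that $\mu$ is injective, i.e. nonzero.

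To evaluate $\mu$ I would construct the comparison map $\mathbb{H}_*\rightarrow\mathbb{E}_*$ lifting $R/I_2\rightarrow R/(I_1+I_2)$ at the level of Koszul complexes. Expressing the generators of $I_2$ in terms of those of $I_1+I_2=\langle x_0,x_1^t,x_2,\ldots,x_n\rangle$ with $t=\beta$, one has $x_j-x_0=x_j-x_0$ and, crucially, $\prod_{i=1}^{\beta}(u_ix_0-x_1)=(-1)^{\beta}x_1^{\beta}+x_0\cdot q$ for some $q\in R$; reducing modulo $x_0$ annihilates every term except the unit multiple $(-1)^{\beta}x_1^{\beta}$. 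The induced map on top modules is the corresponding maximal minor of the comparison matrix, and the minor indexed by omitting the column of $x_0$ equals $(-1)^{\beta}$, a unit. Consequently $\mu$ carries the generator of $\Tor_n^R(R/I_2,\mathbb{K})_{\beta+n-1}=\mathbb{K}$ to a vector with a nonzero coordinate in $\Tor_n^R(R/(I_1+I_2),\mathbb{K})_{\beta+n-1}=\mathbb{K}^n$, so $\mu$ is injective, $\Tor_n^R(R/I,\mathbb{K})_{\beta+n-1}=0$, and $A_n=\beta$.

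The hard part will be this last paragraph: pinning down which maximal minor of the comparison matrix records the image of $\mu$ and verifying that it is a unit rather than an element of the maximal ideal. Everything else is bookkeeping with the explicitly known resolutions $\mathbb{G}_*,\mathbb{H}_*,\mathbb{E}_*$; the real content is that the degree-$\beta$ separator $\prod_{i=1}^{\beta}(u_ix_0-x_1)$ contributes the constant $(-1)^{\beta}$ modulo $x_0$, which is precisely what prevents the summand $R(-(\beta+n-1))$ of $H_n$ from surviving as a summand $R(-(\beta+n-1))$ of $F_n$.
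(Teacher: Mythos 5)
Your proof is a genuinely different argument from the paper's, and its core computation is sound. The paper proceeds by contradiction inside the mapping cone: assuming $A_n=\beta-1$, it forces the block $R^n(-(\beta+n-1))\oplus R(-n)$ into $F_{n-1}$, and then excludes this by showing (via the Claim on elements of $I$ of degree $<s$ and the shape of the Betti diagram) that only the Koszul syzygies on $\mathcal A$ could produce such summands, and they supply only $R^{n-1}(-(\beta+n-1))\oplus R(-n)$. You instead reduce the lemma to $\Tor_n^R(R/I,\mathbb K)_{\beta+n-1}=0$, extract this from the long exact sequence of Tor applied to $(*)$, and prove injectivity of $\mu$ by lifting $R/I_2\to R/(I_1+I_2)$ to a map of Koszul complexes; your key point, that the maximal minor attached to the basis element $e_{x_1^{\beta}}\wedge e_{x_2}\wedge\cdots\wedge e_{x_n}$ of $E_n$ equals the unit $(-1)^{\beta}$ because $\prod_{i}(u_ix_0-x_1)\equiv(-1)^{\beta}x_1^{\beta}$ modulo $x_0$, is correct. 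This is cleaner and more conceptual than the paper's cancellation bookkeeping.

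But there is a genuine calibration flaw you must repair. The only place you invoke $s\geq\beta+2$ is to get $\Tor_n^R(R/I_1,\mathbb K)_{\beta+n-1}=0$ from $G_n=R^k(-(s+n))$, and for that inequality $s+n>\beta+n-1$, i.e. $s\geq\beta$, already suffices. So as written your argument proves $A_n=\beta$ whenever $s\geq\beta$ --- which is false: the second configuration in \exref{exm:example4} has $s=\beta=3$ and $A_3=\beta-1$. The paradox is resolved by noting that the paper's setup claim $C_{n-1}=S^k(-(s+n-1))$, which you used in the form $G_n=R^k(-(s+n))$, fails in the borderline case $\alpha=\binom{s+n-2}{n-1}$; that example is exactly such a case ($6=\binom{4}{2}$), where $C_2=S^3(-4)$, hence $G_3=R^3(-5)$ \emph{does} contribute to internal degree $\beta+n-1=5$, the middle term of your sequence becomes $\mathbb K^3\oplus\mathbb K$ rather than $\mathbb K$, and injectivity of $\mu$ no longer forces the vanishing. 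The repair is to use only the safe bound: $J$ has no nonzero elements in degrees $<s$, so every shift of $C_{n-1}$ is at least $s+(n-2)$, hence every shift of $G_n$ is at least $s+n-1$, and the vanishing you need follows from $s>\beta$. Under the lemma's hypothesis $s\geq\beta+2$ this holds comfortably, so with this re-justified step your proof is correct --- and it in fact establishes the conclusion already for $s\geq\beta+1$, slightly more than the paper's lemma.
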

\begin{proof} We have $s\geq \beta+2$. Again $t=\beta$ and let's assume that $A_n=\beta-1$. From the proof of Lemma \ref{lem:lemma2}, since $A_n=\beta-1$ and therefore $a(\Gamma)=\beta+n-1$, we have that $$K=R^p(-(s+n))\oplus R(-(\beta+n-1)),$$ for some $p\leq k$. So we have $$F_n=R^p(-(s+n))\oplus R(-(\beta+n))\oplus R(-(\beta+n-1)).$$ We must mention that we used the one copy of $R(-(\beta+n-1))$ to obtain the corresponding cancellation in $W_{n+1}$ that gave us $E_{n+1}=R(-(\beta+n))$.

To obtain $E_n=R^n(-(\beta+n-1))\oplus R(-n)$ from $W_n=F_{n-1}\oplus R^k(-(s+n))\oplus R(-(\beta+n-1))$ through a cancellation, since we already used $R(-(\beta+n-1))$ and since $s\geq\beta+2$, then the whole block $R^n(-(\beta+n-1))\oplus R(-n)$ should be a direct summand inside $F_{n-1}$.

We have that $$\mathcal A=\{x_0(x_2-x_0),\ldots,x_0(x_n-x_0),x_0\prod_{i=1}^{\beta}(u_ix_0-x_1)\}$$ is a subset of the minimal generators of $I$. In fact $$F_1=R^{n-1}(-2)\oplus R(-(\beta+1))\oplus \bigoplus R(-a_{1j}).$$

\vskip .1in

\noindent\textit{Claim:} $\min\{a_{1j}\}\geq s$.

\vskip .1in

\noindent\textit{Proof of Claim:} Let $f\in I=I_1\cap I_2$, with $\deg(f)=b < s$. Since $f\in I_1=\langle x_0,J\rangle$, then we can assume that $f=x_0g+h,g\in R$ and $h\in J\cap\mathbb K[x_1,\ldots,x_n]$ with $\deg(h)=b$. Since $J$ is minimally generated in degree $\geq s$, then $h=0$ and we get that $f\in\langle x_0\rangle$. So $f\in \langle x_0\rangle\cap I_2$ and therefore, after the change of variables $x_0'=x_0,x_1'=x_1,x_2'=x_2-x_0,\ldots,x_n'=x_n-x_0$, we have that $$f=x_0'f_0=x_2'f_2+\cdots+x_n'f_n+(\prod_{i=1}^{\beta}(u_ix_0'-x_1'))f_1,$$ where $f_i\in\mathbb K[x_0',\ldots,x_n']$.

We have that $$ht(\langle x_0',x_2',\ldots,x_n',\prod_{i=1}^{\beta}(u_ix_0'-x_1')\rangle)=ht(\langle x_0',x_2',\ldots,x_n',(x_1')^{\beta})=n+1,$$ so $\{x_0',x_2',\ldots,x_n',\prod_{i=1}^{\beta}(u_ix_0'-x_1')\}$ forms a regular sequence and so $f_0\in\langle x_2',\ldots,x_n',\prod_{i=1}^{\beta}(u_ix_0'-x_1')\rangle$. This implies that $$f=x_0'f_0\in \langle x_0'x_2',\ldots,x_0'x_n',x_0'\prod_{i=1}^{\beta}(u_ix_0'-x_1')\rangle.$$ We just proved that if $f\in I$ of degree $\deg(f)<s$, then $f\in\langle\mathcal A\rangle$. So the Claim is shown.

\vskip .1in

\begin{table}[h]
\begin{tabular}{r|ccccc}
&$0$&$1$&$\cdots$&$n-1$&$n$\\
\hline
total:&1&$b_1$&$\cdots$&$b_{n-1}$&$b_n$\\
$0$:&$1$&-&$\cdots$&-&-\\
$1$:&-&$n-1$&$\cdots$&$1$&-\\
$\vdots$&$\vdots$&$\vdots$& &$\vdots$&$\vdots$ \\
$\beta$:&-&$1$&$\cdots$&$n-1$&$1$ \\
$\vdots$&$\vdots$&$\vdots$& &$\vdots$&$\vdots$ \\
\hline
$s-1$:&-&$c_1$&$\cdots$&$c_{n-1}$&$c_n$\\
\hline
$\vdots$&$\vdots$&$\vdots$& &$\vdots$&$\vdots$
\end{tabular}
\end{table}

The table above describes how the betti diagram of $R/I$ should look like. It is important to mention that since $s\geq \beta+2$, then all the syzygies of any order involving at least one minimal generator of $I$ of degree $\geq s$ should occur in the row labeled $s-1$ or below. With this in mind, $R^n(-(\beta+n-1))\oplus R(-n)$ inside $F_{n-1}$ can be obtained only from the (Koszul) syzygies on the set $\mathcal A$. But the $(n-1)-$syzygy module of $\mathcal A$ is $$R^{n-1}(-(\beta+n-1))\oplus R(-n).$$ So if $A_n=\beta-1$, we get an extra $R(-(\beta+n-1))$ in $F_{n-1}$. Contradiction. Consequently, we must have $A_n=\beta$.
\end{proof}

If we put everything together we have:

\begin{thm} Let $\Gamma_1\subset \mathbb P^n$ be a generic set of $\alpha$ points in a hyperplane in $\mathbb P^n$ and let $\Gamma_2\subset \mathbb P^n$ be a set of $\beta$ distinct points on a line in $\mathbb P^n$ not contained in this hyperplane. Suppose that $\alpha\geq \beta+n-2$. Let $\Gamma=\Gamma_1\cup\Gamma_2$. Then:
\begin{enumerate}
  \item If $\alpha<{{\beta+n-3}\choose{n-1}}$, then $d(\Gamma)>A_n$.
  \item If $\alpha\geq{{\beta+n}\choose{n-1}}$, then $d(\Gamma)=A_n$.
\end{enumerate}
\end{thm}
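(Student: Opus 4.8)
The plan is to deduce both parts from the three structural lemmas already in hand, essentially by computing $d(\Gamma)$ outright and then translating the two numerical hypotheses on $\alpha$ into the hypotheses $s\le\beta-2$ and $s\ge\beta+2$ of \lemref{lem:lemma1} and \lemref{lem:lemma3}. The first step is to record that, under the standing assumption $\alpha\ge\beta+n-2$, one has $hyp(\Gamma)=\alpha$ and hence $d(\Gamma)=(\alpha+\beta)-\alpha=\beta$. The hyperplane $\{x_0=0\}$ captures exactly the $\alpha$ points of $\Gamma_1$, so $hyp(\Gamma)\ge\alpha$. Conversely, any hyperplane $H\ne\{x_0=0\}$ meets $\{x_0=0\}$ in a codimension-two linear space: if $H$ contains the line carrying $\Gamma_2$, that space passes through the fixed point where the line meets $\{x_0=0\}$, so by genericity of $\Gamma_1$ it contains at most $n-2$ of its points, whence $|H\cap\Gamma|\le(n-2)+\beta$; if not, $H$ meets the line in at most one point and $\Gamma_1$ in at most $n-1$ points, whence $|H\cap\Gamma|\le n$. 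As $\alpha\ge\beta+n-2\ge n$ for $\beta\ge2$, this gives $hyp(\Gamma)=\alpha$ and reduces the theorem to comparing the number $\beta$ with $A_n$.

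Next I would unwind the definition of $s$, the least integer with $\alpha<{{s+n-1}\choose{n-1}}$, which is equivalent to the two-sided estimate ${{s+n-2}\choose{n-1}}\le\alpha<{{s+n-1}\choose{n-1}}$, and then exploit the strict monotonicity of $m\mapsto{{m}\choose{n-1}}$ on integers $m\ge n-1$. For part (1), combining $\alpha<{{\beta+n-3}\choose{n-1}}$ with ${{s+n-2}\choose{n-1}}\le\alpha$ gives ${{s+n-2}\choose{n-1}}<{{\beta+n-3}\choose{n-1}}$, hence $s+n-2<\beta+n-3$, that is $s\le\beta-2$. For part (2), combining $\alpha\ge{{\beta+n}\choose{n-1}}$ with $\alpha<{{s+n-1}\choose{n-1}}$ gives ${{\beta+n}\choose{n-1}}<{{s+n-1}\choose{n-1}}$, hence $\beta+n<s+n-1$, that is $s\ge\beta+2$.

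Finally I would feed these into the lemmas and read off the conclusion. In case (1), $s\le\beta-2$ and \lemref{lem:lemma1} give $A_n<\beta=d(\Gamma)$, so $d(\Gamma)>A_n$. In case (2), $s\ge\beta+2$ and \lemref{lem:lemma3} give $A_n=\beta=d(\Gamma)$, so $d(\Gamma)=A_n$. The mathematical weight of the argument is carried entirely by \lemref{lem:lemma1} and \lemref{lem:lemma3}; the only care needed in this assembly is bookkeeping with binomial coefficients---keeping every argument in the range $m\ge n-1$ where monotonicity is strict and dismissing the degenerate small-$\beta$ cases (e.g. $\beta=1$, where the hypothesis in (1) is vacuous)---together with the verification that $d(\Gamma)=\beta$, which is the only place the bound $\alpha\ge\beta+n-2$ is genuinely used. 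I do not anticipate a real obstacle, precisely because the difficulty was front-loaded into the lemmas; the gap $\beta-1\le s\le\beta+1$ left uncovered by the two parts is exactly the ambiguous range of \lemref{lem:lemma2}, which is why the theorem restricts to the two clean regimes.
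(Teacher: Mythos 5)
Your proposal is correct and follows essentially the same route as the paper: translate the hypothesis $\alpha<{{\beta+n-3}\choose{n-1}}$ into $s\le\beta-2$ and the hypothesis $\alpha\geq{{\beta+n}\choose{n-1}}$ into $s\geq\beta+2$ via (strict) monotonicity of $m\mapsto{{m}\choose{n-1}}$, then quote \lemref{lem:lemma1} and \lemref{lem:lemma3} and compare with $d(\Gamma)=\beta$. The only difference is that you explicitly verify $hyp(\Gamma)=\alpha$ (hence $d(\Gamma)=\beta$) from $\alpha\ge\beta+n-2$ and the genericity of $\Gamma_1$, a point the paper builds into its construction without proof; this is a welcome extra detail rather than a different approach.
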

\begin{proof} Since $s$ is the smallest integer such that $\alpha<{{s+n-1}\choose{n-1}}$, then $\alpha<{{\beta+n-3}\choose{n-1}}$ will give us that $s\leq \beta-2$. Similarly, $\alpha\geq{{\beta+n}\choose{n-1}}$ implies that $s>\beta+1$. We obtain the theorem by using Lemma \ref{lem:lemma1} and Lemma \ref{lem:lemma3} above.
\end{proof}

We end with some examples describing what can happen if $s$ is in the range not covered by the theorem above: $s=\beta-1,\beta,\beta+1$. Keeping in mind that $d(\Gamma)=\beta$, we want to see if $d(\Gamma)=A_n$ or not.

\begin{exm}\label{exm:example4} If $s=\beta$, then both situations in Lemma \ref{lem:lemma2} can occur.

First, Example \ref{exm:examplethree} belongs to this situation: $\alpha=n<{{2+n-1}\choose{n-1}}$ (so $s=2$) and $\beta=2$. For this example we have that $d(\Gamma)=A_n$.

Next, consider the following set of $\alpha=6$ points contained in the hyperplane of $\mathbb P^3$ of equation $x_0=0$: $$\Gamma_1=\{[0,0,0,1],[0,1,0,1],[0,0,1,1],[0,1,1,1],[0,2,1,2],[0,-1,-2,1]\}.$$ Disregarding the first coordinate $x_0=0$, we have a set of $6={{2+2}\choose{2}}$ points in $\mathbb P^2$, and so $s=3$. We have that the ideal $J\subset\mathbb K[x_1,x_2,x_3]$ of these points is minimally generated by four cubic generators. So these six points form a generic set of points in $\mathbb P^2$.

Consider the following set of $\beta=3=s$ points on a line in $\mathbb P^3$: $$\Gamma_2=\{[1,7,5,0],[1,3,4,0],[2,10,9,0]\}.$$

Let $\Gamma=\Gamma_1\cup\Gamma_2$ and let $I\subset R=\mathbb K[x_0,x_1,x_2,x_3]$ be the ideal of $\Gamma$. With Macaulay 2 we can obtain the graded minimal free resolution of $R/I$: $$0\rightarrow R(-6)\oplus R(-5)\rightarrow R^6(-4)\oplus R(-3)\rightarrow R^4(-3)\oplus R^2(-2)\rightarrow R.$$

We have $A_3=5-3=2$, and therefore $d(\Gamma)=A_3+1$.
\end{exm}

\vskip .1in

\begin{exm} In the previous example if we remove the last point from the set $\Gamma_1$, we are in the situation of a generic set of five points in the hyperplane $x_0=0$ in $\mathbb P^3$, with $s=2$. Keeping the same $\Gamma_2$ as above (and so $s=\beta-1$), we obtain that $d(\Gamma)=A_3+1$.

If in Example \ref{exm:example4} we keep $\Gamma_1$ as is, and if we remove one point from $\Gamma_2$, we will be in the situation when $s=\beta+1$. With Macaulay 2 we obtain that $d(\Gamma)=A_3$.
\end{exm}

\vskip .2in

\noindent\textbf{Acknowledgements:} We would like to express our sincere gratitude to Prof. Juan Migliore. Without his vision, Section 3 wouldn't have been possible. Also we are very grateful to the anonymous referee for the important corrections, suggestions and comments that made the reading more clear and the statements more precise. Because of his/her questions we were able to improve considerably the lower bound in Lemma \ref{lem:lemma3}.

\renewcommand{\baselinestretch}{1.0}
\small\normalsize 

\bibliographystyle{amsalpha}

\end{document}